%
%
\def\draftdate{\today}

\documentclass{amsart}
\usepackage{stmaryrd}
\usepackage{xy}
\usepackage{mathrsfs}
\usepackage{amscd,amssymb,color}
\usepackage[colorlinks=true]{hyperref}




\setcounter{equation}{0}

\numberwithin{equation}{section}

\newtheorem{theorem}[equation]{Theorem}
\newtheorem*{theorem*}{Theorem}
\newtheorem*{theoremA}{Theorem A}
\newtheorem*{theoremB}{Theorem B}

\newtheorem{proposition}[equation]{Proposition}
\theoremstyle{definition}

\theoremstyle{remark}




\setcounter{tocdepth}{1}


\DeclareMathOperator{\Mot}{Mot}

\DeclareMathOperator{\id}{id}

\newcommand{\too}{\longrightarrow}
\newcommand{\dg}{\mathsf{dg}}


\newcommand{\cA}{{\mathcal A}}
\newcommand{\cB}{{\mathcal B}}
\newcommand{\cC}{{\mathcal C}}
\newcommand{\cD}{{\mathcal D}}

\newcommand{\cM}{{\mathcal M}}

\newcommand{\cO}{{\mathcal O}}

\newcommand{\cT}{{\mathcal T}}
\newcommand{\cU}{{\mathcal U}}



\newcommand{\bbD}{\mathbb{D}}

\newcommand{\bbL}{\mathbb{L}}

\newcommand{\bbK}{I\mspace{-6.mu}K}

\newcommand{\bbN}{\mathbb{N}}
\newcommand{\bbZ}{\mathbb{Z}}

\newcommand{\ie}{\textsl{i.e.}\ }

\newcommand{\Hmo}{\mathsf{Hmo}}



\newcommand{\perf}{\mathsf{perf}} 


\newcommand{\loc}{\mathsf{loc}}
\newcommand{\Hom}{\mathsf{Hom}} 
\newcommand{\rep}{\mathsf{rep}} 
\newcommand{\dgcat}{\mathsf{dgcat}}
\newcommand{\HO}{\mathsf{HO}} 


\newcommand{\Mloc}{\Mot^{\mathsf{loc}}_{\dg}}
\newcommand{\Uloc}{\cU^{\mathsf{loc}}_{\dg}}


\newcommand{\uHom}{\underline{\mathsf{Hom}}}
\newcommand{\HomC}{\uHom_{!}}
\newcommand{\HomL}{\uHom_{\mathsf{loc}}}


\newcommand{\internalcomment}[1]{}
\xyoption{arrow}
\xyoption{matrix}
\xyoption{cmtip}
\SelectTips{cm}{}

\newdir{ >}{{}*!/-5pt/\dir{>}}

\bibliographystyle{plain}

\begin{document}

\title[Bivariant cohomology, Connes' pairings, and non-commutative motives]{Bivariant cyclic cohomology and\\ Connes' bilinear pairings in \\non-commutative motives}
\author{Gon{\c c}alo~Tabuada}
\address{Departamento de Matem{\'a}tica e CMA, FCT-UNL, Quinta da Torre, 2829-516 Caparica,~Portugal}
\email{tabuada@fct.unl.pt}

\date{\draftdate}
\subjclass[2000]{19D55, 19D35, 14F42}

\keywords{Non-commutative algebraic geometry, Bivariant cyclic cohomology, Connes' bilinear pairings, Non-commutative motives.}

\begin{abstract}
In this article we further the study of non-commutative motives, initiated in \cite{CT,CT1,Duke}. We prove that bivariant cyclic cohomology (and its variants) becomes representable in the category $\Mloc(e)$ of non-commutative motives. Furthermore, Connes' bilinear pairings correspond to the composition operation in $\Mloc(e)$. As an application, we obtain a simple model, given in terms of infinite matrices, for the (de)suspension of these bivariant cohomology theories.
\end{abstract}
\maketitle
\section{Introduction and statement of results}
\subsection{Non-commutative motives}
A {\em differential graded (=dg) category}, over a commutative base ring $k$, is a category enriched over 
complexes of $k$-modules (morphisms sets are complexes)
in such a way that composition fulfills the Leibniz rule\,:
$d(f\circ g)=(df)\circ g+(-1)^{\textrm{deg}(f)}f\circ(dg)$.
Dg categories enhance and solve many of the technical problems inherent to triangulated categories;
see Keller's ICM adress~\cite{ICM}. In {\em non-commutative algebraic geometry} in the sense of Drinfeld, Kaledin, Kontsevich, and others \cite{Drinfeld,Drinfeld1,Kaledin,ENS,IAS,finMot}, dg categories are considered as dg-enhancements of bounded derived categories of (quasi-)coherent sheaves on a
hypothetic non-commutative space.

All the classical invariants such as cyclic homology (and its variants), algebraic $K$-theory, and even topological cyclic homology, extend naturally from $k$-algebras to dg categories.
In order to study all these invariants simultaneously the author
introduced in \cite{Duke} the notion of {\em localizing invariant}. This notion, makes use of the language of Grothendieck derivators (see \S\ref{sec:Grothendieck}), a formalism which
allows us to state and prove precise universal properties.
Let $\mathit{L}: \HO(\dgcat) \to \bbD$ be a morphism of derivators, from the
derivator associated to the derived Morita model structure on dg categories (see \S\ref{sub:dg}), to
a triangulated derivator.
We say that $\mathit{L}$ is a localizing invariant if it preserves filtered homotopy and sends exact sequences of dg categories
\begin{eqnarray*}
\cA \too \cB \too \cC & \mapsto & \mathit{L}(\cA) \too \mathit{L}(\cB) \too \mathit{L}(\cC) \too \mathit{L}(\cA)[1]
\end{eqnarray*}
to distinguished triangles in the base category $\bbD(e)$ of $\bbD$.
Thanks to the work of Keller~\cite{Exact}, Thomason-Trobaugh~\cite{Thomason}, and Blumberg-Mandell~\cite{BM} (see also \cite{AGT}) all the mentioned invariants\footnote{In the case of algebraic
$K$-theory we consider its non-connective version.} give rise to localizing invariants. 
In \cite{Duke} the {\em universal localizing invariant} was constructed
$$ \Uloc: \HO(\dgcat) \too \Mloc\,.$$
Given any triangulated derivator $\bbD$ we have an induced equivalence of categories
\begin{equation}\label{eq:cat}
(\Uloc)^{\ast}: \HomC(\Mloc, \bbD) \stackrel{\sim}{\too} \HomL(\HO(\dgcat), \bbD)\,,
\end{equation}
where the left-hand side denotes the category of homotopy colimit preserving morphisms of derivators,
and the right-hand side denotes the category of localizing invariants.
Because of this universality property, which is a reminiscence of motives, $\Mloc$ is called the {\em localizing motivator}, and its base (triangulated) category $\Mloc(e)$ the category of {\em non-commutative motives}. We invite the reader to consult \cite{BT,CT,CT1,Duke,Chern} for applications of this theory of non-commutative motives to the Farrell-Jones isomorphism conjectures, to Kontsevich's non-commutative mixed motives, to the construction of higher Chern characters, etc.

A fundamental problem in the theory of non-commutative motives is the computation of morphisms in $\Mloc(e)$ and the description of its composition operation. In \cite{CT,CT1,Duke} an important step towards the solution of this problem was taken: let $\cA$ be a {\em saturated} dg category in the sense of Kontsevich~\cite{ENS,IAS}, \ie its complexes of morphisms are perfect and $\cA$ is perfect as a bimodule over itself. Dg categories of perfect complexes associated to smooth and proper schemes and algebras of finite global cohomological dimension are classical examples. Then, for any $n \in \bbZ$ and dg category $\cB$ we have a natural isomorphism in $\Mloc(e)$
\begin{equation}\label{eq:isosK-theory}
\Hom\left(\Uloc(\cA),\, \Uloc(\cB)[-n]\right) \simeq \bbK_n(\rep(\cA, \cB))\,,
\end{equation}
where $\bbK$ denotes non-connective $K$-theory and $\rep(-,-)$ the internal Hom-functor in the homotopy category of dg categories (see~\S\ref{sub:dg}). The composition operation is induced by the tensor product of bimodules. In particular, when $\cA$ is the dg category $\underline{k}$ associated to the base ring $k$ (with one object and $k$ as the dg algebra of endomorphisms) we obtain
\begin{equation}\label{eq:isosK-theory1}
\Hom\left(\Uloc(\underline{k}),\, \Uloc(\cB)[-n]\right) \simeq \bbK_n(\cB)\,.
\end{equation}
At this point it is natural to ask the following motivational questions\,:
\smallbreak
\noindent\textbf{Question~A: }\textit{Which (further) invariants of dg categories can be expressed in terms of morphism sets in the category of non-commutative motives\,?}
\smallbreak
\noindent\textbf{Question~B: }\textit{How to explicitly describe the composition operation in these new cases\,?}
\smallbreak
Intuitively, our answer is ``Bivariant cyclic cohomology, with Connes' bilinear pairings playing the role of the composition operation''.
\subsection{Bivariant cyclic cohomology}
Jones and Kassel, by drawing inspiration from Kasparov's $KK$-theory~\cite{Kasparov}, introduced in \cite{JK} the bivariant cyclic cohomology theory of unital associative $k$-algebras. One of the fundamental properties of this bivariant theory is the fact that it simultaneously extends both negative cyclic homology as well as cyclic cohomology. 
 Bivariant cyclic cohomology $HC^{\ast}(-,-)$ and its two variants, bivariant Hochschild cohomology $HH^*(-,-)$~\cite[\S5.5]{Loday} and bivariant periodic cyclic cohomology $HP^{\ast}(-,-)$~\cite[\S8]{JK}, extend naturally from $k$-algebras to dg categories. They associate to any pair of dg categories $(\cB, \cC)$ a $\bbZ$-graded $k$-module which is contravariant on $\cB$ and covariant on $\cC$.
\newpage
Our answer to the above Question A is the following.
\begin{theoremA}\label{thm:A}
There exist triangulated functors
\begin{equation}\label{eq:triangfunc}
\cT^{H}, \cT^{C}, \cT^{P} : \Mloc(e) \too \Mloc(e)
\end{equation}
such that for any $m \in \bbZ$ and dg categories $\cB$ and $\cC$, we have natural isomorphisms\,:
\begin{eqnarray}
\Hom\left(\Uloc(\cB)[-m], \cT^{H}(\Uloc(\cC))\right)&\simeq& HH^m(\cB, \cC) \label{eq:iso-1}\\
\Hom\left(\Uloc(\cB)[-m], \cT^{C}(\Uloc(\cC))\right)&\simeq& HC^m(\cB, \cC)   \label{eq:iso-2}\\
\Hom\left(\Uloc(\cB)[-m], \cT^{P}(\Uloc(\cC))\right) &\simeq& HP^m(\cB,\cC) \,, \label{eq:iso-3}
\end{eqnarray}
where \eqref{eq:iso-3} holds under the hypothesis that $\Uloc(\cB)$ is compact~\cite{Neeman} in $\Mloc(e)$.
\end{theoremA}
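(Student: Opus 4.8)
The plan is to reduce the three bivariant cohomology theories to the representability statement \eqref{eq:isosK-theory} by realizing each of $HH^*(\cB,\cC)$, $HC^*(\cB,\cC)$, $HP^*(\cB,\cC)$ as the homotopy groups of a mapping spectrum between $\Uloc(\cB)$ and a suitably modified motive built out of $\Uloc(\cC)$. Concretely, recall that bivariant Hochschild, cyclic and periodic cohomology are computed as $\mathrm{R}\Hom$'s of the relevant mixed-complex (or cyclic-module) invariants of $\rep(\cB,\cC)$; thus the first step is to recast $HH^m(\cB,\cC)$, $HC^m(\cB,\cC)$, $HP^m(\cB,\cC)$ as $\Hom_{\Ho(\ModZ)}$ (resp. $\Hom$ in the derived category of mixed complexes, resp. in the $S$-inverted/periodic category) of the $K$-theoretic or $HH$-type invariant of $\cB$ against that of $\cC$, in the style of \eqref{eq:isosK-theory}. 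The key input from the theory of non-commutative motives is that $\Mloc(e)$ is a compactly generated triangulated category with $\Uloc(\underline{k})$ (and more generally $\Uloc(\cA)$ for $\cA$ a finite cell dg category) among its compact generators, and that any localizing invariant factors through $\Uloc$ via a homotopy-colimit-preserving morphism; this is exactly equivalence \eqref{eq:cat}.

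The main construction is the following. For a localizing invariant $E\colon \HO(\dgcat)\to\bbD$ with values in spectra (here $E$ will be, in turn, topological Hochschild homology / the mixed-complex invariant, negative cyclic homology, and periodic cyclic homology — each of which is localizing, or becomes so after the relevant completion, by Keller and by Blumberg--Mandell as cited), the universal property \eqref{eq:cat} produces a homotopy-colimit-preserving morphism $\uE\colon\Mloc\to\bbD$ with $\uE\circ\Uloc\simeq E$. Composing $\uE$ with an appropriate adjoint (using that $\bbD$ is generated under homotopy colimits by the image of $\Uloc$, together with the internal-Hom structure of $\Mloc$) one obtains, for each of the three cases, an endofunctor of $\Mloc(e)$; I would \emph{define} $\cT^H$, $\cT^C$, $\cT^P$ to be exactly these endofunctors, sending $\Uloc(\cC)$ to the motive corepresenting, via $\Hom(\Uloc(\cB)[-m],-)$, the groups $HH^m(\cB,\cC)$ etc. That these are triangulated is automatic since they are built from exact functors of stable/triangulated categories. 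The isomorphisms \eqref{eq:iso-1}--\eqref{eq:iso-3} then follow by unwinding: $\Hom(\Uloc(\cB)[-m],\cT^?(\Uloc(\cC)))$ identifies with $\pi_m$ of a mapping spectrum which, by \eqref{eq:isosK-theory}-type computations applied to $\rep(\cB,\cC)$, is the value of the bivariant theory — one reduces to $\cB=\underline k$ and to the saturated case by standard dévissage, then extends to all $\cB$ by writing $\cB$ as a filtered homotopy colimit of its finite dg subcategories and using that $\Uloc$ preserves filtered homotopy colimits, that $HH^*$ and $HC^*$ send such colimits to the expected (co)limits, and that this forces the corepresenting motive to be the one claimed. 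For \eqref{eq:iso-3} the passage to the inverse limit defining $HP^*$ only commutes with $\Hom(\Uloc(\cB)[-m],-)$ when $\Uloc(\cB)$ is compact, which is precisely the stated hypothesis and is where the $\lim^1$-terms are controlled.

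I expect the main obstacle to be bookkeeping the \emph{grading and periodicity} correctly, and in particular verifying that the endofunctor $\cT^P$ one extracts from periodic cyclic homology is genuinely triangulated and lands in $\Mloc(e)$ rather than in a larger completion — the periodic theory is not a priori a localizing invariant in the literal sense (it fails to commute with all filtered homotopy colimits, only with finite ones), so one must work with finite/saturated $\cC$ first, obtain $\cT^P$ there, and then argue that the resulting functor extends to all of $\Mloc(e)$; here the compactness hypothesis on $\Uloc(\cB)$ in \eqref{eq:iso-3} is essential and must be invoked carefully. A secondary, more technical point is to make precise the ``modified motive built out of $\Uloc(\cC)$'': one should identify it with $\uE_?(\Uloc(\cC))$ for the appropriate $\uE_?$ and check naturality in $\cC$ of the whole package, which is routine but requires care with the derivator formalism of \S\ref{sec:Grothendieck}. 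Everything else — triangulatedness, the reduction to $\rep(\cB,\cC)$, and the identification with \eqref{eq:isosK-theory} — should go through by the universal property \eqref{eq:cat} and standard compact-generation arguments.
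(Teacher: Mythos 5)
Your overall template for $\cT^{H}$ and $\cT^{C}$ --- factor the localizing invariant through $\Uloc$ via \eqref{eq:cat} and compose with a right adjoint --- is indeed the paper's, but two essential points are missing and one proposed step would fail. First, you never justify the existence of the right adjoint: the paper proves that $\Mloc(e)$ is well-generated (it is the homotopy category of a combinatorial stable model category, so Rosicky's theorem applies) and then invokes Neeman's adjoint functor theorem for the coproduct-preserving triangulated functors $HH_{\loc}\colon\Mloc(e)\to\cD(k)$ and $C_{\loc}\colon\Mloc(e)\to\cD(\Lambda)$, where $\Lambda=k[\epsilon]/\epsilon^{2}$ is the dual numbers governing mixed complexes. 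Your appeal to ``the internal-Hom structure of $\Mloc$'' and to generation of $\bbD$ by the image of $\Uloc$ does not produce this adjoint. Second, and more seriously, the identification of the Hom-sets does not go through $\rep(\cB,\cC)$ or through \eqref{eq:isosK-theory} at all: by the very definitions of Loday and Jones--Kassel, $HH^{m}(\cB,\cC)\simeq\Hom_{\cD(k)}(HH(\cB)[-m],HH(\cC))$ and $HC^{m}(\cB,\cC)=\mathrm{Ext}^{m}_{\Lambda}(C(\cB),C(\cC))$, so the adjunction $(C_{\loc},R_{C})$ gives \eqref{eq:iso-2} for \emph{all} $\cB$ in one line, with no saturation hypothesis and no d\'evissage. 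Your proposed reduction --- first saturated or finite $\cB$, then filtered colimits of finite dg subcategories --- is both unnecessary and broken: \eqref{eq:isosK-theory} is a statement about $K$-theory requiring $\cA$ saturated, and a theory contravariant in $\cB$ does not convert filtered colimits in $\cB$ into limits compatibly with $\Hom(\Uloc(-),-)$ without exactly the kind of $\lim^{1}$ control you would be trying to establish.

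For $\cT^{P}$ you have not actually produced a construction. The paper does not attempt to factor $HP$ itself through $\Uloc$ (your worry that it is not localizing is precisely why); instead it uses Jones--Kassel's periodicity operator $S\in\mathrm{Ext}^{2}_{\Lambda}(M,M)$ to obtain a natural transformation $\cT^{C}\Rightarrow\cT^{C}[2]$ and defines $\cT^{P}$ as the homotopy colimit of $\cT^{C}\Rightarrow\cT^{C}[2]\Rightarrow\cdots$ inside $\Mloc(e)$, which is automatically triangulated. Note also that bivariant periodic cyclic cohomology is the \emph{direct} limit $HP^{m}(\cB,\cC)=\varinjlim_{r}HC^{m+2r}(\cB,\cC)$, not an inverse limit: the compactness of $\Uloc(\cB)$ is used to commute $\Hom(\Uloc(\cB)[-m],-)$ past a homotopy colimit in the \emph{second} variable, not to control $\lim^{1}$-terms of an inverse system. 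So the role you assign to the compactness hypothesis is misplaced, even though you correctly sense that it is needed to interchange $\Hom$ with a limit-type construction.
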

Thanks to \cite[Corollary~7.7]{CT1} if $\cB$ is a saturated dg category in the sense of Kontsevich, then $\Uloc(\cB)$ is compact object in $\Mloc(e)$. Recall from~\cite[\S5.5.1]{Loday}\cite{JK} that we have the following isomorphisms\,:
 \begin{eqnarray}
HH^{\ast}(\underline{k},\cC)\simeq HH_{-\ast}(\cC) && HH^{\ast}(\cB, \underline{k})\simeq HH^{\ast}(\cB) \label{eq:iso-4}\\
 HC^{\ast}(\underline{k}, \cC) \simeq HC^-_{-\ast}(\cC) && HC^{\ast}(\cB, \underline{k}) \simeq HC^{\ast}(\cB) \label{eq:iso-5}\\
HP^{\ast}(\underline{k},\cC) \simeq HP_{-\ast}(\cC)\  && HP^{\ast}(\cB,\underline{k}) \simeq HP^{\ast}(\cB)\,.\label{eq:iso-6}
\end{eqnarray}
Therefore, by replacing $\cB$ or $\cC$ by $\underline{k}$ we obtain two important instantiations of Theorem~A\,: if we replace $\cC$ by $\underline{k}$, the Hochschild $HH^{\ast}(-)$, cyclic $HC^{\ast}(-)$, and periodic cyclic $HP^{\ast}(-)$, cohomology theories become representable in $\Mloc(e)$; on the other hand if we replace $\cB$ by $\underline{k}$, the above isomorphisms \eqref{eq:iso-4}-\eqref{eq:iso-6} combined with isomorphism \eqref{eq:isosK-theory1} show us that the triangulated functors (\ref{eq:triangfunc}) allow us to ``switch'', inside the category of non-commutative motives, from algebraic $K$-theory to the Hochschild, negative cyclic, and periodic cyclic, homology.
\subsection*{Connes' bilinear pairings}
In his foundational work on non-commutative geometry, in the early eighties, Connes~\cite{Connes} discovered bilinear pairings
\begin{equation}\label{eq:pairings}
\langle-,-\rangle: K_0(\cB) \times HC^{2j}(\cB) \too k \qquad j \geq 0
\end{equation}
relating the Grothendieck group with the even part of cyclic cohomology. These bilinear pairings, which were the main motivation behind the development of a cyclic theory, consist roughly on the evaluation of a cyclic cochain at an idempotent representing a finitely generated projective module over $\cB$. 

Now, the above isomorphisms \eqref{eq:isosK-theory}-\eqref{eq:isosK-theory1} and \eqref{eq:iso-1}-\eqref{eq:iso-3} show us that both algebraic $K$-theory as well as the different bivariant cohomology theories can be expressed in terms of morphism sets in the category of non-commutative motives. Therefore, the composition operation in the triangulated category $\Mloc(e)$, combined with these isomorphisms, furnish us bilinear pairings\,:
\begin{eqnarray}
\bbK_n(\rep(\cA,\cB)) \times HH^m(\cB,\cC) &\too & HH^{m-n}(\cA,\cC)  \label{eq:pairing-1}\\
\bbK_n(\rep(\cA,\cB)) \times HC^m(\cB,\cC) &\too & HC^{m-n}(\cA,\cC)  \label{eq:pairing-2}\\
\bbK_n(\rep(\cA,\cB)) \times HP^m(\cB,\cC) &\too & HP^{m-n}(\cA,\cC)  \label{eq:pairing-3}\,.
\end{eqnarray}
Our answer to the above Question B is the following\,:
\begin{theoremB}
The bilinear pairing (\ref{eq:pairing-2}), with $n=0$, $\cA=\cC=\underline{k}$ and $m=2j$, corresponds to Connes' original bilinear pairing (\ref{eq:pairings}). 
\end{theoremB}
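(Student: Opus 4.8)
The plan is to unwind the motivic pairing \eqref{eq:pairing-2} in the stated special case, exhibit it as the Chern character followed by the canonical duality pairing between negative cyclic homology and cyclic cohomology, and then invoke the classical fact (Connes; see also \cite[\S8.3]{Loday}) that this composite is Connes' original pairing \eqref{eq:pairings}. Concretely, specializing \eqref{eq:pairing-2} to $n=0$, $\cA=\cC=\underline{k}$ and $m=2j$, and combining \eqref{eq:isosK-theory1}, \eqref{eq:iso-2} and \eqref{eq:iso-5}, the pairing takes the form
\[
K_0(\cB)\times HC^{2j}(\cB)\too HC^{2j}(\underline{k},\underline{k})\cong HC^{-}_{-2j}(k)\cong k ,
\]
and sends $(f,\varphi)$ to the composite $g_\varphi\circ f[-2j]$, where $f\in\Hom(\Uloc(\underline{k}),\Uloc(\cB))$ corresponds to the given $K_0$-class under \eqref{eq:isosK-theory1} and $g_\varphi\in\Hom(\Uloc(\cB)[-2j],\cT^{C}(\Uloc(\underline{k})))$ corresponds to $\varphi$ under \eqref{eq:iso-2}.

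Next I would factor this composite through the Chern character. Non-connective $K$-theory and negative cyclic homology are both localizing invariants and the Chern character is a morphism between them, so by the universal property \eqref{eq:cat} it is realized inside $\Mloc(e)$ by a natural morphism $\mathrm{ch}_{\cB}\colon\Uloc(\cB)\too\cT^{C}(\Uloc(\cB))$; in fact the construction of $\cT^{C}$ in the proof of Theorem~A produces $\cT^{C}(\Uloc(\cB))$ together with a canonical class in the bivariant group $HC^{0}(\cB,\cB)$, and $\mathrm{ch}_{\cB}$ is its image under \eqref{eq:iso-2} (equivalently, $\mathrm{ch}_{\cB}$ is characterized by compatibility of \eqref{eq:isosK-theory1} with the isomorphism $HC^{0}(\underline{k},\cB)\simeq HC^{-}_{0}(\cB)$). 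Naturality of $\cT^{C}$ and functoriality of composition in $\Mloc(e)$ then identify $g_\varphi\circ f[-2j]$ with the image of $\mathrm{ch}(f):=\mathrm{ch}_{\cB}\circ f\in HC^{-}_{0}(\cB)$ under the morphism $HC^{-}_{0}(\cB)\to k$ determined by $\varphi$; the latter is tautologically the Jones-Kassel composition product $HC^{0}(\underline{k},\cB)\times HC^{2j}(\cB,\underline{k})\to HC^{2j}(\underline{k},\underline{k})$, \ie the canonical duality pairing $\langle-,-\rangle\colon HC^{-}_{0}(\cB)\times HC^{2j}(\cB)\to k$.

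It then remains to identify $\mathrm{ch}$ explicitly on $K_0$. For a unital $k$-algebra $\cB$ and a finitely generated projective module $E=e\,\cB^{N}$ with $e=e^{2}\in M_{N}(\cB)$, the morphism $f$ attached to $[E]$ by \eqref{eq:isosK-theory} is the one induced by the representable $\underline{k}$-$\cB$-bimodule $E$; by Morita invariance of $\Uloc$ (insensitivity to matrices and to idempotent completion) it factors as $\Uloc(\underline{k})\to\Uloc(\widehat{M_{N}(\cB)})\xleftarrow{\sim}\Uloc(\cB)$, the first arrow induced by the unital algebra map $k\to eM_{N}(\cB)e$ classifying the summand $e$. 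Applying the mixed-complex realization $\Mloc(e)\to\mathcal{D}(\Lambda)$ — the factorization through $\Mloc$ of Kassel's normalized mixed-complex functor, used in the proof of Theorem~A — and computing on the nose recovers the idempotent loop, whose class in $HC^{-}_{0}(\cB)$ is the classical Chern-Connes character of $[e]$, namely $\sum_{i\ge 0}(-1)^{i}\tfrac{(2i)!}{i!}\,\mathrm{tr}(e^{\otimes(2i+1)})$ (Connes; \cite[\S8.3--8.4]{Loday}). Inserting this into the previous paragraph and comparing with the classical formula for \eqref{eq:pairings} — that Connes' pairing of $[e]$ against a cyclic $2j$-cocycle $\varphi$ equals $\langle\mathrm{ch}([e]),\varphi\rangle$ — then concludes the argument.

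The hard part will be this last identification: matching the abstractly defined morphism $f$, built only from the bimodule $E$ and Morita equivalences, with the explicit idempotent cycle after realization, while keeping track of all signs and normalization constants. I expect one must either carry out this bookkeeping directly through the construction of the isomorphism \eqref{eq:isosK-theory} and through the mixed-complex functor, or else isolate a uniqueness statement — any morphism $\bbK\to HC^{-}$ of localizing invariants inducing the standard map on $\bbK_{0}$ must be the Chern character — which then forces the identification. A secondary, purely clerical difficulty is keeping the Jones-Kassel degree conventions aligned with Connes' cohomological indexing across all the reductions above.
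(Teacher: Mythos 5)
Your overall architecture matches the paper's: specialize the pairing, use the adjunction $C_{\loc}\dashv R_C$ to transport the composition in $\Mloc(e)$ to composition in $\cD(\Lambda)$, identify the induced map $K_0(\cB)\to HC^-_0(\cB)$ with the Chern character $ch^-(\cB)$, and then match the Jones--Kassel composition product against Connes' evaluation pairing. The paper's diagram \eqref{eq:com1} is exactly your ``factor through $\mathrm{ch}_\cB$'' step (your $\mathrm{ch}_\cB$ is the unit of the adjunction), and its diagram \eqref{eq:com2} is your final ``tautological'' identification, which the paper justifies via the maps $U_j\colon HC^-_0\to HC_{2j}$ with $U_j\circ ch^-=ch_{2j}$ and the description of composition in $\cD(\Lambda)$ from \cite[Theorem~5.1]{JK} --- a step you assert rather than source, but which is genuinely needed since the JK composition product lands in $HC^{2j}(\underline{k},\underline{k})$ and must be compared with $\mathrm{ev}$ through the periodicity/$S$-maps.

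The one real gap is the step you yourself flag as ``the hard part'': identifying the abstractly induced natural transformation $K_0\Rightarrow HC^-_0$ with the classical Chern character. Your primary plan --- tracing an idempotent $e\in M_N(\cB)$ through the bimodule realizing its class, through Morita invariance, and through the mixed-complex realization, keeping all signs and the constants $(-1)^i(2i)!/i!$ straight --- is not carried out and would be quite delicate; as written the proposal does not close this. Your fallback is the right move and is precisely what the paper does: it invokes the rigidity theorem \cite[Theorem~1.3]{Chern}, which says $\mathrm{Nat}(K_0,HC^-_0)\simeq HC^-_0(\underline{k})\simeq k$ with $ch^-\mapsto{\bf 1}$, and then only needs the essentially trivial check that the motivic map sends the identity of $\Uloc(\underline{k})$ (i.e.\ the class $[k]$) to ${\bf 1}\in k$, using $HC^0(\underline{k},\underline{k})\simeq k$ from \cite[Theorem~2.3]{JK}. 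So to complete your argument you should commit to that uniqueness route (it is an external input, Proposition~\ref{prop:Chern} in the paper, not something recoverable from Theorem~A alone) and supply the reference for the comparison of the composition product with $\mathrm{ev}\circ(U_j\times\id)$.
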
  
Theorem~B supports the Grothendieckian belief that all classical constructions in (non-commutative) geometry should become conceptually clear in the correct category of (non-commutative) motives. The above pairings \eqref{eq:pairing-1}-\eqref{eq:pairing-3}, which correspond to the composition operation in the category of non-commutative motives, can therefore be considered as an extension of Connes' foundational work.

\section{Background on dg categories and derivators}\label{sec:background}
Throughout this article $k$ denotes a commutative base ring with unit ${\bf 1}$. Adjunctions are displayed vertically with the left (resp. right) adjoint on the left- (resp. right-) hand side. 
\subsection{Dg categories}\label{sub:dg}
Let $\cC(k)$ be the category of (unbounded) complexes of $k$-modules. A {\em differential graded (=dg) category} is a category enriched over $\cC(k)$ and a {\em dg functor} is a functor enriched over $\cC(k)$; consult Keller's survey~\cite{ICM}. The category of dg categories will be denoted by $\dgcat$.

Given dg categories $\cA$ and $\cB$ their {\em tensor product}
$\cA \otimes \cB$ is defined as follows: the set of objects is the cartesian product and given objects $(x,z)$ and $(y,w)$ in $\cA \otimes \cB$, we set $(\cA \otimes \cB)((x,z),(y,w)):= \cA(x,y) \otimes \cB(z,w)$.

A dg functor $F: \cA \to \cB$ is a called a {\em derived Morita equivalence}
if it induces an equivalence $\cD(\cA) \stackrel{\sim}{\to}~\cD(\cB)$ between the associated derived categories. Thanks to \cite[Theorem~5.3]{IMRN} the category $\dgcat$ carries a (cofibrantly generated) Quillen model structure~\cite{Quillen} whose weak equivalences are the derived Morita equivalences. We denote by $\Hmo$ the homotopy category hence obtained.

The tensor product of dg categories can be derived into a bifunctor $-\otimes^\bbL-$ on $\Hmo$. Moreover, this bifunctor admits an internal Hom-functor $\rep(-,-)$. Given dg categories $\cA$ and $\cB$, $\rep(\cA,\cB)$ is the full dg subcategory of $\cA\text{-}\cB$-bimodules $X$ such that, for every object $x$ in $\cA$, the right $\cB$-module $X(-,x)$ is a compact object in the triangulated category $\cD(\cB)$; see \cite[\S2.4]{CT1}.

\subsection{Grothendieck derivators}\label{sec:Grothendieck}
Derivators allow us to state and prove precise universal properties and to dispense with many of the technical problems one faces in using Quillen model categories; consult Grothendieck's original manuscript~\cite{Grothendieck} or \cite[\S1]{CN} for a short account. Given a Quillen model category $\cM$, we will denote by $\HO(\cM)$ its associated derivator. In order to simplify the exposition, a morphism of derivators and its value at the base category $e$ (which has one object and one morphism) will be denoted by the same symbol. It will be clear from the context which situation we are referring to.

\section{Proof of Theorem~A}\label{sec:proofA}
We start by proving that the category of non-commutative motives satisfies an important compactness property.
\begin{proposition}\label{prop:well-gen}
The triangulated category $\Mloc(e)$ of non-commutative motives is well-generated in the sense of Neeman~\cite[Remark~8.1.7]{Neeman}.
\end{proposition}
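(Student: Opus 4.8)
The plan is to exhibit $\Mloc(e)$ as a localization of a well-generated triangulated category and invoke the fact that well-generation is inherited by localizations at sets of objects. The starting point is the construction of $\Uloc: \HO(\dgcat) \too \Mloc$ recalled from \cite{Duke}: the localizing motivator $\Mloc$ is obtained from the ``additive'' (or unstable) motivator by a Bousfield localization forcing the exact-sequence condition, and at a more primitive level everything is built from the category $\dgcat_f$ of small (homotopically finitely presented) dg categories via a Yoneda-type embedding into presheaves of spectra. Concretely, one first notes that $\Mloc(e)$ is a compactly generated triangulated category with a set of compact generators given by the images $\Uloc(\cA)$ for $\cA$ ranging over (a set of representatives of) the homotopically finitely presented dg categories — this is essentially how the stable motivator is assembled, as a left Bousfield localization of a category of presheaves of symmetric spectra on $\dgcat_f$.

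**First I would** make precise the claim that, before imposing the localizing (exactness) relations, the relevant motivator has a compactly generated — hence well-generated — base category. The unstable/additive universal morphism $\Uadd$ (or the even more basic $\cU^{\mathsf{nc}}_{\dg}$) lands in a category that is, by construction in \cite[\S5--7]{Duke}, a left Bousfield localization of $\Fun(\dgcat_f^\op, \Sp^\Sigma)$-type presheaf categories; localizing a compactly generated triangulated category at a \emph{set} of morphisms yields again a well-generated category by Neeman's localization theorem \cite[Theorem~4.4.9, Chapter~8]{Neeman} (see also Krause). **Then I would** observe that $\Mloc$ is obtained from this by one further left Bousfield localization, this time at the set of morphisms encoding the cone condition for exact sequences of dg categories — again a \emph{set}, since exact sequences are indexed by (homotopically finitely presented) inclusions $\cA \too \cB$. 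Applying Neeman's stability result once more, $\Mloc(e)$ remains well-generated.

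**The hard part will be** pinning down that the classes of morphisms being inverted at each localization step genuinely form a \emph{set} (up to the relevant equivalence), rather than a proper class, since Neeman's theorem that well-generation passes to localizations requires the localizing subcategory to be generated by a set. For the additive/flltered-homotopy-colimit relations and the exact-sequence relations this is handled by restricting attention to the small category $\dgcat_f$ of homotopically finitely presented dg categories: every exact sequence and every filtered diagram relevant to the defining universal properties can be taken to be indexed over objects of $\dgcat_f$, of which there is a set of isomorphism classes. Once this smallness is in place, the argument is a formal two-step application of \cite[Chapter~4 and Theorem~8.4.2]{Neeman}. I would therefore structure the proof as: (i) recall that $\HO(\dgcat)$ restricted to $\dgcat_f$ gives, after the stabilization/Yoneda construction of \cite{Duke}, a compactly generated base category; (ii) identify $\Mloc$ with a left Bousfield localization of this at a set of maps (combining the additive and localizing conditions); (iii) conclude by Neeman's theorem that well-generation is preserved under localization at a set, so $\Mloc(e)$ is well-generated.
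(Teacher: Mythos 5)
Your strategy is sound and reaches the right conclusion, but it takes a genuinely different route from the paper. The paper also starts from the realization of $\Mloc(e)$ as the homotopy category of a stable model category $\cMloc$ obtained by left Bousfield localizations of presheaves of symmetric spectra on the small category $\dgcat_{\mathsf{f}}$, but it then stays entirely at the model-categorical level: symmetric spectra form a combinatorial model category, combinatoriality is preserved under passage to presheaves and under left Bousfield localization, so $\cMloc$ is combinatorial, and Rosicky's theorem (the homotopy category of a stable combinatorial model category is well-generated) finishes the proof in one stroke. You instead argue at the triangulated level: the pre-localized homotopy category is compactly generated, and Neeman's theorem that well-generation survives Verdier quotients by localizing subcategories generated by a \emph{set} of objects does the rest. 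Your version has the merit of making visible exactly where smallness enters (the inverted classes must be sets, which is what indexing over $\dgcat_{\mathsf{f}}$ buys you), whereas the paper's version absorbs this into the definition of a left Bousfield localization of a combinatorial model category, which is by convention performed at a set of maps; the paper's argument is correspondingly shorter and does not need to track the localization steps one at a time.

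One correction: your opening claim that ``$\Mloc(e)$ is a compactly generated triangulated category with a set of compact generators given by the images $\Uloc(\cA)$'' is not true, or at least not known --- and if it were, the proposition would be immediate and the rest of your argument superfluous, since compactly generated categories are well-generated. Compactness of $\Uloc(\cB)$ in $\Mloc(e)$ is precisely the delicate hypothesis appearing in isomorphism \eqref{eq:iso-3} of Theorem~A, available only for special $\cB$ (e.g.\ saturated dg categories, by \cite[Corollary~7.7]{CT1}). What is compactly generated is the homotopy category of presheaves of symmetric spectra on $\dgcat_{\mathsf{f}}$ \emph{before} the Bousfield localizations; the localizations destroy compactness of the generators, which is exactly why one only obtains well-generation for $\Mloc(e)$. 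Your steps (i)--(iii) use only this weaker, correct statement, so the slip does not propagate, but the initial sentence should be deleted.
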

\begin{proof}
Recall from \cite[\S7.1]{CT1} that the triangulated category $\Mloc(e)$ can be realized as the homotopy category of a stable Quillen model category $\cM ot_{\dg}^{\loc}$. This model category is obtained by taking left Bousfield localizations of presheaves of symmetric spectra on a small category $\dgcat_{\mathsf{f}}$; see the proof of \cite[Theorem~7.5]{CT1}. Since symmetric spectra~\cite{HSS} is a combinatorial model category and this class of model categories is stable under left Bousfield localizations and passage to presheaves, the model category $\cM ot_{\dg}^{\loc}$ is combinatorial in the sense of Smith. Therefore, thanks to \cite[Proposition~6.10]{Rosicky} we conclude that  $\Mloc(e)$ is well-generated in the sense of Neeman. 
\end{proof}
Now, recall from \cite[Examples~7.9 and 7.10]{CT1} the construction of the Hochschild homology and mixed complex localizing invariants
\begin{eqnarray*}
HH: \HO(\dgcat) \too \HO(\cC(k)) &  & C: \HO(\dgcat) \too \HO(\cC(\Lambda))\,.
\end{eqnarray*} 
Here, $\Lambda$ stands for the dg algebra $k[\epsilon]/\epsilon^2$, where the variable $\epsilon$ is of degree $-1$ and satisfies $d(\epsilon)=~0$. Thanks to the equivalence of categories~\eqref{eq:cat}, these morphisms of derivators factor (uniquely) through the universal localizing invariant $\Uloc$. We obtain then triangulated functors
\begin{eqnarray}\label{eq:triang-func}
HH_{\loc}: \Mloc(e) \too \cD(k) &  & C_{\loc}: \Mloc(e) \too \cD(\Lambda)\,,
\end{eqnarray} 
and natural equivalences
\begin{eqnarray}\label{eq:triang-eq}
HH \simeq HH_{\loc} \circ \Uloc &  & C \simeq C_{\loc} \circ \Uloc \,.
\end{eqnarray} 
\begin{proposition}\label{prop:adjs}
The triangulated functors \eqref{eq:triang-func} admit right adjoints.
\end{proposition}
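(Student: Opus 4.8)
The plan is to verify that the two triangulated functors $HH_{\loc}$ and $C_{\loc}$ satisfy Neeman's Brown representability hypotheses, and then invoke the representability theorem to produce right adjoints. Concretely, a triangulated functor between triangulated categories admits a right adjoint as soon as (i) its source is a well-generated (or compactly generated) triangulated category, and (ii) the functor preserves all small coproducts. Part (i) is exactly Proposition~\ref{prop:well-gen}: $\Mloc(e)$ is well-generated in the sense of Neeman. So the work is concentrated in part (ii): I would show that $HH_{\loc}$ and $C_{\loc}$ preserve arbitrary coproducts.

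First I would recall why coproduct-preservation is available. Both $HH_{\loc}$ and $C_{\loc}$ were obtained, via the universal property \eqref{eq:cat}, as the (unique) homotopy-colimit-preserving morphisms of derivators through which $HH$ and $C$ factor; that is, they are the images under $(\Uloc)^\ast$ of the localizing invariants $HH$ and $C$. By construction, a morphism of derivators in $\HomC(\Mloc,\bbD)$ preserves homotopy colimits, and in particular preserves arbitrary (homotopy) coproducts. Passing to the base categories, the induced triangulated functors on $\Mloc(e)$ preserve small coproducts in the triangulated sense. Thus both functors in \eqref{eq:triang-func} are coproduct-preserving triangulated functors out of a well-generated triangulated category.

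Then I would apply Neeman's Brown representability theorem for well-generated triangulated categories (see \cite[Theorem~8.3.3, Proposition~8.4.2]{Neeman}, or equivalently Krause's formulation): if $\cT$ is a well-generated triangulated category and $F: \cT \to \cS$ is a triangulated functor that preserves small coproducts, then $F$ has a right adjoint. Applying this with $\cT = \Mloc(e)$, and with $\cS = \cD(k)$ and $\cS = \cD(\Lambda)$ respectively (both of which are themselves well-generated, so the statement is unambiguous), yields the desired right adjoints to $HH_{\loc}$ and $C_{\loc}$.

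The main obstacle — really the only non-formal point — is the coproduct-preservation claim, i.e.\ making precise that ``preserves homotopy colimits as a morphism of derivators'' descends to ``preserves small coproducts as a triangulated functor on the base category.'' This is where one must be careful about the distinction between coproducts computed in the model-categorical/derivator sense and those in the triangulated homotopy category; the cleanest route is to observe that in a triangulated derivator the coproduct in $\bbD(e)$ is computed as a homotopy colimit indexed by a discrete category, so a homotopy-colimit-preserving morphism of derivators automatically sends it to the corresponding homotopy colimit, which is the coproduct in the target base category. Once this compatibility is recorded, the rest is a direct citation of Neeman's representability theorem, with Proposition~\ref{prop:well-gen} supplying the hypothesis on the source.
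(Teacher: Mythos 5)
Your proposal is correct and follows essentially the same route as the paper: invoke Proposition~\ref{prop:well-gen} for well-generation of $\Mloc(e)$, observe that $HH_{\loc}$ and $C_{\loc}$ preserve arbitrary coproducts because they arise from homotopy-colimit-preserving morphisms of derivators via \eqref{eq:cat}, and conclude by Neeman's adjoint/representability theorem (the paper cites \cite[Theorem~8.4.4]{Neeman} together with \cite[Theorem~1.17]{Neeman}). Your extra remark that coproducts in the base category are homotopy colimits over discrete categories is a welcome elaboration of a step the paper merely asserts.
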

\begin{proof}
The proof will consist on verifying the conditions of \cite[Theorem~8.4.4]{Neeman}. First, observe that the triangulated categories $\Mloc(e)$, $\cD(k)$ and $\cD(\Lambda)$ have small Hom-sets since they can be realized as homotopy categories of (stable) Quillen model categories. Thanks to Proposition~\ref{prop:well-gen} the category $\Mloc(e)$ is well-generated in the sense of Neeman and so by \cite[Theorem~1.17]{Neeman} it satisfies the representability theorem. Finally, since the left-hand side of the equivalence of categories \eqref{eq:cat} consists of homotopy colimit preserving morphisms of derivators, we conclude that the triangulated functors \eqref{eq:triang-func} respect arbitrary coproducts. This achieves the proof.
\end{proof}
Let us denote by  
\begin{eqnarray}\label{eq:adjs}
\xymatrix{
\cD(k)\ar@<1ex>[d]^{R_{H}} \\
\Mloc(e) \ar@<1ex>[u]^{HH_{\loc}}
}
&&
\xymatrix{
\cD(\Lambda)\ar@<1ex>[d]^{R_C} \\
\Mloc(e) \ar@<1ex>[u]^{C_{\loc}}
}
\end{eqnarray}
the adjunctions of triangulated categories given by Proposition~\ref{prop:adjs}. The triangulated endofunctors $R_{H}\circ HH_{\loc}$ and $R_C \circ C_{\loc}$ of $\Mloc(e)$ will be denoted, respectively, by $\cT^{H}$ and $\cT^{C}$. 

For any $m \in \bbZ$ and dg categories $\cB$ and $\cC$, we have natural isomorphisms\,:
\begin{eqnarray}
\Hom \left(\Uloc(\cB)[-m], \cT^{H}(\Uloc(\cC))\right) & \simeq& \Hom_{\cD(k)}(HH(\cB)[-m], HH(\cC)) \label{eq:equiv-1}\\
&\simeq  & HH^m(\cB,\cC)  \label{eq:equiv-2}\,.
\end{eqnarray}
Isomorphism \eqref{eq:equiv-1} follows from the left-hand side adjunction in \eqref{eq:adjs} and from the left-hand side equivalence in \eqref{eq:triang-eq}. Isomorphism \eqref{eq:equiv-2} follows from \cite[Definition~5.5.5.1]{Loday}\footnote{In {\it loc.~cit.} the author used the symbol $C$, instead of $HH$, to denote the Hochschild complex.} and from the fact that homology in degree $-m$ of the complex of maps between $HH(\cB)$ and $HH(\cC)$ is naturally isomorphic to the set of maps in $\cD(k)$ between $HH(\cB)[-m]$ and $HH(\cC)$. This proves isomorphism \eqref{eq:iso-1} in Theorem~A.

In the mixed complex case we have similar natural isomorphisms\,:
\begin{eqnarray}
\Hom \left(\Uloc(\cB)[-m], \cT^{C}(\Uloc(\cC))\right) & \simeq&  \Hom_{\cD(\Lambda)}(C(\cB)[-m], C(\cC)) \label{eq:equiva-3}\\
&\simeq  & \mathrm{Ext}^m_\Lambda(C(\cB), C(\cC))   \label{eq:equiva-4}\\
&=  & HC^m(\cB,\cC)  \,. \label{eq:equiva-5}
\end{eqnarray}
Isomorphism \eqref{eq:equiva-3} follow from the righ-hand side adjunction in \eqref{eq:adjs} and from the right-hand side equivalence in \eqref{eq:triang-eq}. Isomorphism \eqref{eq:equiva-4} is a standard fact in homological algebra. Equality \eqref{eq:equiva-5} follows from the definition of bivariant cyclic cohomology; see \cite[page 2]{JK}. This proves isomorphism \eqref{eq:iso-2} in Theorem~A.

Let us now prove isomorphism \eqref{eq:iso-3} in Theorem~A. Given any object $M$ in $\cD(\Lambda)$, we have a functorial periodicity map $S$ in $\mathrm{Ext}_\Lambda^2(M,M)\simeq \Hom_{\cD(\Lambda)}(M,M[2])$;  see~\cite[\S1]{JK}. This gives rise to an induced natural transformation of triangulated functors
\begin{equation*}
S: \cT^{C}=(R_C \circ C_{\loc}) \Rightarrow (R_C \circ (-[2]) \circ C_{\loc})=: \cT^{C}[2]\,.
\end{equation*}
Let $\cT^{P}$ be the homotopy colimit~\cite[Definition~1.6.4]{Neeman} of the following diagram of natural transformations of triangulated functors
$$ \cT^{C} \stackrel{S}{\Rightarrow} \cT^{C}[2] \stackrel{S}{\Rightarrow} \cdots \stackrel{S}{\Rightarrow} \cT^{C}[2r] \stackrel{S}{\Rightarrow} \cdots\,.$$
For any $m \in \bbZ$ and dg categories $\cB$ and $\cC$, such that $\Uloc(\cB)$ is compact in $\Mloc(e)$, we have  natural isomorphisms\,:
\begin{eqnarray}
\Hom \left(\Uloc(\cB)[-m], \cT^{P}(\Uloc(\cC))\right) & =& \Hom \left(\Uloc(\cB)[-m], \varinjlim_r\, \cT^{C}[2r](\Uloc(\cC))\right)\nonumber\\
&\simeq  &\varinjlim_r\, \Hom\left(\Uloc(\cB)[-m], \cT^{C}[2r](\Uloc(\cC))\right)  \label{eq:equiv-4} \\
&\simeq  &\varinjlim_r\, \Hom\left(\Uloc(\cB)[-m], \cT^{C}(\Uloc(\cC))[2r]\right)  \label{eq:equiv-5} \\
&\simeq  &\varinjlim_r\, \Hom\left(\Uloc(\cB)[-m-2r], \cT^{C}(\Uloc(\cC))\right)  \nonumber \\
&\simeq  &\varinjlim_r\, HC^{m+2r}(\cB,\cC)  \label{eq:equiv-6} \\
&=  & HP^m(\cB,\cC)  \label{eq:equiv-7} \,.
\end{eqnarray}
Isomorphism (\ref{eq:equiv-4}) follows from the compactness of $\Uloc(\cB)$ in the triangulated category $\Mloc(e)$. Isomorphism (\ref{eq:equiv-5}) follows from the natural equivalence of triangulated functors
$$ \cT^{C}[2r]:=(R_C \circ (-[2r])\circ C_{\loc}) \simeq (R_C \circ C_{\loc})[2r]\,.$$ Isomorphism (\ref{eq:equiv-6}) follows from the isomorphism (\ref{eq:iso-2}) of Theorem~A, which was already proven. Finally, equality (\ref{eq:equiv-7}) follows from the definition of bivariant periodic cyclic cohomology; see \cite[Definition~8.1]{JK}.
\section{Proof of Theorem~B}
We start by describing the classical Chern character map in terms of the category of non-commutative motives. Recall from isomorphism \eqref{eq:isosK-theory1} that, given any dg category $\cB$, we have a natural isomorphism
$$ K_0(\cB)\simeq \Hom(\Uloc(\underline{k}), \Uloc(\cB))\,.$$
By combining isomorphism \eqref{eq:iso-5} with isomorphisms \eqref{eq:equiva-4}-\eqref{eq:equiva-5}, we obtain also the following identification
$$ \Hom_{\cD(\Lambda)}(C(\underline{k}),C(\cB)) \simeq HC^-_0(\cB)\,.$$
Therefore, since the triangulated functor
$$ C_{\loc}: \Mloc(e) \too \cD(\Lambda)$$
sends $\Uloc(\underline{k})$ to $C(\underline{k})$, we obtain an induced map
\begin{equation}\label{eq:Chern}
K_0(\cB) \simeq \Hom(\Uloc(\underline{k}), \Uloc(\cB)) \too \Hom_{\cD(\Lambda)}(C(\underline{k}), C(\cB)) \simeq HC^-_0(\cB)\,.
\end{equation}
\begin{proposition}\label{prop:Chern}
Given any dg category $\cB$, the above map \eqref{eq:Chern} is the Chern character map $ch^-(\cB)$ of $\cB$; see \cite[\S8.3]{Loday}.
\end{proposition}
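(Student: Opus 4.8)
The statement to be proved is that the map \eqref{eq:Chern}, built abstractly out of the localizing motivator, agrees with the classical Chern character $ch^-(\cB): K_0(\cB) \to HC^-_0(\cB)$. The overall strategy is to reduce to the universal case $\cB = \underline{k}[t]$ (the dg algebra $k[t]$, or more precisely a dg category whose $K_0$ receives a tautological class) and to track a single generating idempotent through the chain of identifications. Concretely, I would proceed as follows.

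First I would recall, following \cite[\S8.3]{Loday}, the explicit construction of $ch^-(\cB)$: a class in $K_0(\cB)$ is represented (after stabilization) by an idempotent $e \in M_r(\cB)$ over $\cB$, and $ch^-$ sends it to the explicit cycle $\sum_{j\geq 0} (-1)^j \frac{(2j)!}{j!}\,(e - \tfrac{1}{2})\otimes e^{\otimes 2j}$ (with the appropriate normalization) in the negative cyclic bicomplex. The key functoriality property I will exploit is that both the source and the target of \eqref{eq:Chern} are \emph{localizing} invariants of $\cB$ (nonconnective $K$-theory on the one hand; negative cyclic homology $HC^-_0$, which arises from the mixed-complex invariant $C$ followed by $\Hom_{\cD(\Lambda)}(C(\underline{k}),-)$, i.e.\ $\pi_0$ of $R_C\circ C_{\loc}$, on the other), and that the map between them, being induced by applying the triangulated functor $C_{\loc}$ to morphisms out of $\Uloc(\underline{k})$, is a \emph{natural transformation of localizing invariants} $K_0(-) \Rightarrow HC^-_0(-)$.

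The heart of the argument is then a Yoneda/universality reduction. By the isomorphism \eqref{eq:isosK-theory1} with $\cB$ replaced by $\underline{k}$, one has $\Hom(\Uloc(\underline{k}),\Uloc(\underline{k})) \simeq K_0(k) \simeq \bbZ$, generated by the identity $\id_{\Uloc(\underline{k})}$, which corresponds to the class $[k] \in K_0(k)$. Tracing this class through \eqref{eq:Chern}: $\id_{\Uloc(\underline{k})}$ maps to $\id_{C(\underline{k})} \in \Hom_{\cD(\Lambda)}(C(\underline{k}),C(\underline{k})) \simeq HC^-_0(k)$, and under the identification \eqref{eq:iso-5} this is precisely the generator $1 \in HC^-_0(k) \simeq k$, which is exactly $ch^-(\underline{k})([k])$ by the classical computation. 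Now, for a general class $\alpha \in K_0(\cB)$ represented by $e \in M_r(\cB)$, one chooses the dg category $\cB_e = \underline{k}$ together with the $\underline{k}$-$\cB$-bimodule determined by $e$ (equivalently a dg functor $\underline{k} \to M_r(\cB) \to \rep(\underline{k},\cB)$ classifying the finitely generated projective module $e\cdot\cB^r$); this induces $\phi_e: \Uloc(\underline{k}) \to \Uloc(\cB)$ with $\phi_e = \alpha$ under \eqref{eq:isosK-theory1}. Both $K_0$ and $HC^-_0$ are functorial for such bimodule-induced maps, and the Morita-invariance plus additivity of $ch^-$ (classical, \cite[\S8.3]{Loday}) says $ch^-(\cB)(\alpha)$ is the image of $ch^-(\underline{k})([k])$ under $HC^-_0$ of the induced map. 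Since \eqref{eq:Chern} is natural for exactly these maps and sends $ch^-(\underline{k})([k])$'s abstract avatar $\id_{\Uloc(\underline{k})}$ to the right place by the computation above, naturality forces \eqref{eq:Chern}$(\alpha) = ch^-(\cB)(\alpha)$.

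\textbf{Main obstacle.} The delicate point is not the abstract naturality but pinning down that the map \eqref{eq:Chern} \emph{is} natural with respect to bimodule-induced morphisms $\Uloc(\underline{k}) \to \Uloc(\cB)$ in a way compatible with the classical functoriality of $ch^-$, and—relatedly—that the single normalization/sign check in the base case $\cB=\underline{k}$ really produces the generator of $HC^-_0(k)$ with the \emph{same} constant as Loday's $ch^-$ rather than a nonzero scalar multiple. Here one must be careful that the identifications \eqref{eq:iso-5} and \eqref{eq:equiva-4}--\eqref{eq:equiva-5} used to build \eqref{eq:Chern} match Loday's and Jones--Kassel's conventions on the nose; I would resolve this by unwinding the equivalence $C_{\loc}(\Uloc(\underline{k})) \simeq C(\underline{k})$ at the level of the mixed complex $C(\underline{k}) = (k, 0, 0)$, where $\Hom_{\cD(\Lambda)}(C(\underline{k}),C(\cB)) = H_0$ of the negative cyclic complex of $\cB$ by a direct computation, and checking that the image of $\id_{\Uloc(\underline{k})}$ is the canonical generator. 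Everything else is a formal consequence of \eqref{eq:cat} and the functoriality already established in the proof of Theorem~A.
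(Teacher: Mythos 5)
Your overall strategy---reduce to the class $[k]\in K_0(\underline{k})$, check that \eqref{eq:Chern} sends its avatar $\id_{\Uloc(\underline{k})}$ to the unit ${\bf 1}\in HC^-_0(\underline{k})\simeq k$, and then propagate by naturality---is exactly the skeleton of the paper's proof. But the load-bearing step in your argument has a genuine gap. You assert that every class $\alpha\in K_0(\cB)$ is represented by an idempotent $e\in M_r(\cB)$ and hence by a bimodule-induced morphism $\Uloc(\underline{k})\to\Uloc(\cB)$, and that ``Morita-invariance plus additivity of $ch^-$'' yields $ch^-(\cB)(\alpha)=\phi_{e,*}\bigl(ch^-(\underline{k})([k])\bigr)$. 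For a dg category $\cB$, however, $K_0(\cB)$ is $K_0$ of the triangulated category of perfect $\cB$-modules; its classes are classes of perfect complexes, not idempotent matrices, and they are not in general generated by classes of representables (idempotent completion can enlarge $K_0$). So the relevant naturality you need is naturality of the classical $ch^-$ with respect to \emph{arbitrary} morphisms in $\Hmo$ (i.e.\ bimodules), and that statement is not a formal consequence of Loday's idempotent formula: it is precisely the nontrivial content that must be supplied. Your ``main obstacle'' paragraph worries about normalization constants, which is a secondary issue; the primary one is that without this $\Hmo$-naturality the reduction to $\underline{k}$ does not determine \eqref{eq:Chern} on all of $K_0(\cB)$, nor does it identify it with $ch^-(\cB)$ there.

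The paper closes exactly this gap by citing its own prior result \cite[Theorem~1.3]{Chern}: the evaluation-at-$[k]$ map $\mathrm{Nat}(K_0,HC^-_0)\stackrel{\sim}{\to}HC^-_0(\underline{k})\simeq k$ is an isomorphism, under which $ch^-$ corresponds to ${\bf 1}$. Since \eqref{eq:Chern} is visibly a natural transformation $K_0\Rightarrow HC^-_0$ (being induced by the triangulated functor $C_{\loc}$ and the factorization $C\simeq C_{\loc}\circ\Uloc$), and since it also evaluates to ${\bf 1}$ at $[k]$ (your base-case computation, which is correct), the two must coincide. If you do not wish to invoke that classification theorem, you must prove an equivalent statement---either that $\mathrm{Nat}(K_0,HC^-_0)$ is detected by evaluation at $[k]$, or that Loday's $ch^-$ extends to dg categories as a natural transformation on $\Hmo$ agreeing with the bimodule functoriality---and neither follows from the ingredients you list.
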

\begin{proof}
Thanks to isomorphism \eqref{eq:isosK-theory1} the Grothendieck group functor can be expressed as the following composition
\begin{equation}\label{eq:Groth}
K_0: \dgcat \too \Hmo \stackrel{\Uloc}{\too} \Mloc(e) \stackrel{\Hom(\Uloc(\underline{k}),-)}{\too} \mathsf{Ab}\,,
\end{equation}
where $\mathsf{Ab}$ denotes the category of abelian groups. Similarly, isomorphism \eqref{eq:iso-4} combined with isomorphisms \eqref{eq:equiva-4}-\eqref{eq:equiva-5}, allow us to express the degree zero negative cyclic homology group functor as the following composition
\begin{equation}\label{eq:negC}
HC^-_0: \dgcat \too \Hmo \stackrel{C}{\too} \cD(\Lambda) \stackrel{\Hom_{\cD(\Lambda)}(C(\underline{k}),-)}{\too} \mathsf{Ab}\,.
\end{equation}
Since we have a commutative diagram
$$
\xymatrix{
\Hmo \ar[r]^C \ar[d]_{\Uloc} & \cD(\Lambda) \\
\Mloc(e) \ar[ur]_{C_{\loc}} & \,,
}
$$
the triangulated functor $C_{\loc}$ gives rise to a natural transformation $K_0 \Rightarrow HC_0^-$ between the above functors \eqref{eq:Groth}-\eqref{eq:negC}, whose value at an arbitrary dg category $\cB$ is the above map \eqref{eq:Chern}. 

Now, recall from \cite[Theorem~1.3]{Chern} that there is an isomorphism
\begin{eqnarray}\label{eq:nat}
\mathrm{Nat}(K_0, HC^-_0) \stackrel{\sim}{\too} HC^-_0(\underline{k})\simeq k & & ch^- \mapsto {\bf 1}
\end{eqnarray}
between natural transformations and the base ring $k$. This isomorphism is given by the evaluation of a natural transformation at the class $[k]$ of $k$ (as a module over itself) in the Grothendieck group $K_0(\underline{k})=K_0(k)$. Moreover, under this isomorphism the Chern character map $ch^-$ corresponds to the unit ${\bf 1}$ of the base ring $k$. The class $[k]$ of $k$ in $K_0(\underline{k})$ is given by the identity map in $\Hom(\Uloc(\underline{k}), \Uloc(\underline{k}))$ and the identity map in $\Hom_{\cD(\Lambda)}(C(\underline{k}), C(\underline{k}))$ corresponds, under the natural isomorphisms
$$ \Hom_{\cD(\Lambda)}(C(\underline{k}),C(\underline{k})) \simeq \mathrm{Ext}^0_{\Lambda}(C(\underline{k}),C(\underline{k})) = HC^0(\underline{k},\underline{k})\simeq HC^-_0(\underline{k})\simeq k\,,$$
to the identity ${\bf 1}$ of $k$; see \cite[Theorem~2.3]{JK}. Therefore, the above isomorphism \eqref{eq:nat} allow us to conclude that given any dg category $\cB$, the map \eqref{eq:Chern} is in fact the Chern character map $ch^-(\cB)$ of $\cB$. This achieves the proof.
\end{proof}
Now, recall from \cite[\S8.3.10]{Loday} that Connes' bilinear pairings (\ref{eq:pairings}) can be expressed as the following compositions
\begin{equation}\label{eq:composition}
\langle-,-\rangle : K_0(\cB) \times HC^{2j}(\cB) \stackrel{ch_{2j} \times \id}{\too} HC_{2j}(\cB) \times HC^{2j}(\cB) \stackrel{\mathrm{ev}}{\too} k\qquad j\geq 0\,,
\end{equation}
where $ch_{2j}$ is the Chern character map (see \cite[\S8.3]{Loday}) and $\mathrm{ev}$ is induced by the evaluation of cyclic cochains on cyclic chains. Thanks to the adjunction
$$
\xymatrix{
\cD(\Lambda)\ar@<1ex>[d]^{R_C} \\
\Mloc(e) \ar@<1ex>[u]^{C_{\loc}} 
}
$$
we obtain a commutative square\footnote{In order to reduce the size of the diagram we have denoted, in the upper row, the universal localizing invariant $\Uloc$ by $\cU$.}
$$
\xymatrix{
\Hom(\cU(\underline{k}), \cU(\cB)) \times \Hom(\cU(\cB), \cT^{C}(\cU(\underline{k}))[2j]) \ar[r]^-{\mathrm{comp}} \ar[d]_{C_{\loc}\times \varphi} & \Hom(\cU(\underline{k}), \cT^{C}(\cU(\underline{k}))[2j]) \ar[d]^{\varphi}_{\sim}\\
\Hom_{\cD(\Lambda)}(C(\underline{k}), C(\cB)) \times \Hom_{\cD(\Lambda)}(C(\cB), C(\underline{k})[2j]) \ar[r]_-{\mathrm{comp}} & \Hom_{\cD(\Lambda)}(C(\underline{k}), C(\underline{k})[2j])\,,
}
$$
where $\varphi$ is the natural isomorphism given by the adjunction and the horizontal maps are the composition operations in $\cD(\Lambda)$ and $\Mloc(e)$. Proposition~\ref{prop:Chern}, combined with isomorphisms \eqref{eq:iso-2}, \eqref{eq:iso-5} and \eqref{eq:equiva-4}-\eqref{eq:equiva-5}, show us that the above commutative square correspond to the following diagram
\begin{equation}\label{eq:com1}
\xymatrix{
K_0(\cB) \times HC^{2j}(\cB) \ar[rr] \ar[d]_{ch^-(\cB)\times \id} && HC^{2j}(\underline{k})\simeq k \ar@{=}[d] \\
HC_0^-(\cB) \times HC^{2j}(\cB) \ar[rr]_-{\mathrm{comp}} && HC^{2j}(\underline{k})\simeq k\,,
}
\end{equation}
where the upper horizontal map is the pairing \eqref{eq:pairing-2} (with $n=0$, $\cA=\cC=\underline{k}$ and $m=2j$). Now, recall from \cite[\S5.1.8]{Loday} that there exist natural maps 
$$U_j: HC^-_0(\cB) \too HC_{2j}(\cB)\qquad j \geq 0$$
such that $U_j \circ ch^-(\cB) = ch_{2j}(\cB)$. Thanks to the description of the composition operation in $\cD(\Lambda)$ given in \cite[Theorem~5.1]{JK} we have the following diagram
\begin{equation}\label{eq:com2}
\xymatrix{
HC^-_0(\cB) \times HC^{2j}(\cB) \ar[rr]^-{\mathrm{comp}} \ar[d]_{U_j \times \id} && k \ar@{=}[d] \\
HC_{2j}(\cB) \times HC^{2j}(\cB) \ar[rr]_-{\mathrm{ev}} && k\,.
}
\end{equation}
Finally, by combining diagram (\ref{eq:com1}) with diagram (\ref{eq:com2}) we conclude that the pairing \eqref{eq:pairing-2} (with $n =0$, $\cA=\cC=\underline{k}$ and $m=2j$) identifies with the above composition \eqref{eq:composition}, and so with Connes' original bilinear pairing \eqref{eq:pairings}. This achieves the proof.

\section{An application\,: (de)suspension of bivariant cohomology theories}
In \cite{suspension} the author constructed a simple model for the suspension in the triangulated category of non-commutative motives. Consider the $k$-algebra $\Gamma$ of $\bbN\times \bbN$-matrices $A$ which satisfy the following two conditions\,: the set $\{A_{i,j}\, |\, i, j \in \bbN \}$ is finite and there exists a natural number $n_A$ such that each row and each column has at most $n_A$ non-zero entries. Let $\Sigma$ be the quotient of $\Gamma$ by the two-sided ideal consisting of those matrices with finitely many non-zero entries; see~\cite[\S3]{suspension}. Alternatively, consider the (left) localization of $\Gamma$ with respect to the matrices $\overline{I_n}, n \geq 0$, with entries $(\overline{I_n})_{i,j}={\bf 1}$ for $i=j >n$ and $0$ otherwise. Then, for any dg category $\cA$ we have a canonical isomorphism
\begin{equation}\label{eq:susp}
\Uloc(\Sigma(\cA)) \stackrel{\sim}{\too} \Uloc(\cA)[1]
\end{equation}
in $\Mloc(e)$, where $\Sigma(\cA)=\cA\otimes \Sigma$. Note that \eqref{eq:susp} shows us that if $\Uloc(\cB)$ is a compact object in $\Mloc(e)$ then so it is $\Uloc(\Sigma(\cB))$.
Now, by combining isomorphism~\eqref{eq:susp} with Theorem~A we obtain the following result.
\begin{theorem}\label{thm:susp}
For any dg categories $\cB$ and $\cC$, we have natural isomorphisms
\begin{eqnarray}
HH^{\ast+1}(\Sigma(\cB),\cC) \simeq HH^{\ast}(\cB,\cC) & & HH^{\ast-1}(\cB,\Sigma(\cC)) \simeq HH^{\ast}(\cB,\cC) \label{eq:new1} \\
HC^{\ast+1}(\Sigma(\cB),\cC) \simeq HC^{\ast}(\cB,\cC)& & HC^{\ast-1}(\cB,\Sigma(\cC)) \simeq HC^{\ast}(\cB,\cC) \label{eq:new2} \\
HP^{\ast+1}(\Sigma(\cB),\cC) \simeq HP^{\ast}(\cB,\cC)& & HP^{\ast-1}(\cB,\Sigma(\cC)) \simeq HP^{\ast}(\cB,\cC) \,, \label{eq:new3}
\end{eqnarray}
where the left-hand side of \eqref{eq:new3} holds under the hypothesis that $\Uloc(\cB)$ is compact (for instance if $\cB$ is a saturated dg category in the sense of Kontsevich). 
\end{theorem}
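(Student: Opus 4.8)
The plan is to deduce the theorem by direct substitution into the representability isomorphisms of Theorem~A, using the canonical isomorphism \eqref{eq:susp} to trade the $\Sigma$-twist for a shift in $\Mloc(e)$. First I would handle the left-hand column. Replacing $\cB$ by $\Sigma(\cB)$ in \eqref{eq:iso-1}--\eqref{eq:iso-3} (taking $m=\ast+1$) gives
$$HH^{\ast+1}(\Sigma(\cB),\cC)\simeq \Hom\left(\Uloc(\Sigma(\cB))[-(\ast+1)],\,\cT^{H}(\Uloc(\cC))\right)\,,$$
and analogously for $HC$ and $HP$. Now \eqref{eq:susp} supplies a natural isomorphism $\Uloc(\Sigma(\cB))\simeq\Uloc(\cB)[1]$, so that $\Uloc(\Sigma(\cB))[-(\ast+1)]\simeq\Uloc(\cB)[-\ast]$; feeding this back through \eqref{eq:iso-1}--\eqref{eq:iso-3} yields the asserted identifications with $HH^{\ast}(\cB,\cC)$, $HC^{\ast}(\cB,\cC)$, and $HP^{\ast}(\cB,\cC)$ respectively.

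For the right-hand column I would instead replace $\cC$ by $\Sigma(\cC)$ in \eqref{eq:iso-1}--\eqref{eq:iso-3} (taking $m=\ast-1$), obtaining for instance $HH^{\ast-1}(\cB,\Sigma(\cC))\simeq\Hom(\Uloc(\cB)[-(\ast-1)],\cT^{H}(\Uloc(\Sigma(\cC))))$. The key point here is that $\cT^{H}$, $\cT^{C}$, $\cT^{P}$ are \emph{triangulated} functors, so they commute with the shift; combined with \eqref{eq:susp} this gives $\cT^{H}(\Uloc(\Sigma(\cC)))\simeq\cT^{H}(\Uloc(\cC))[1]$, whence $\Hom(\Uloc(\cB)[-(\ast-1)],\cT^{H}(\Uloc(\cC))[1])\simeq\Hom(\Uloc(\cB)[-\ast],\cT^{H}(\Uloc(\cC)))\simeq HH^{\ast}(\cB,\cC)$, and likewise for $HC^{\ast}$ and $HP^{\ast}$.

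Naturality in $\cB$ and $\cC$ is automatic, since every isomorphism invoked — those of Theorem~A, the shift isomorphisms in a triangulated category, and \eqref{eq:susp} — is natural. As for the periodic case, \eqref{eq:iso-3} is available only when the relevant first-variable motive is compact, so to obtain the left-hand side of \eqref{eq:new3} one applies \eqref{eq:iso-3} to the pair $(\Sigma(\cB),\cC)$, which requires $\Uloc(\Sigma(\cB))$ to be compact; by the remark following \eqref{eq:susp} this holds whenever $\Uloc(\cB)$ is compact, for instance when $\cB$ is saturated in the sense of Kontsevich (by \cite[Corollary~7.7]{CT1}). I do not expect a genuine obstacle beyond careful bookkeeping of shifts: the only step that uses structure rather than being a formal rewrite is the commutation of $\cT^{H}$, $\cT^{C}$, $\cT^{P}$ with suspension in the second-variable argument — which is precisely why these functors were recorded as triangulated in \eqref{eq:triangfunc}.
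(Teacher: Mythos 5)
Your proposal is correct and follows exactly the route the paper intends: the paper's entire proof is the single line ``by combining isomorphism \eqref{eq:susp} with Theorem~A we obtain the following result,'' and your write-up simply makes that combination explicit --- trading $\Sigma$ for a shift via \eqref{eq:susp} in the first variable, using that $\cT^{H},\cT^{C},\cT^{P}$ are triangulated to commute the shift past them in the second variable, and noting that compactness of $\Uloc(\cB)$ transfers to $\Uloc(\Sigma(\cB))$ for the periodic case. The only detail worth flagging is that your argument for the right-hand isomorphism in \eqref{eq:new3} also invokes \eqref{eq:iso-3} with first variable $\cB$ and hence likewise needs $\Uloc(\cB)$ compact, a hypothesis the theorem statement attaches only to the left-hand isomorphism; this is a quirk of the paper's phrasing rather than a gap in your argument.
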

Theorem~\ref{thm:susp} extends Kassel's previous work~\cite[\S III Theorem~3.1]{Kassel} on bivariant cyclic cohomology on ordinary algebras defined over a field to dg categories defined over a general commutative base ring. Hence, it can now be applied to schemes. Given a (quasi-compact and quasi-separated) $k$-scheme $X$, it is well-known that the category of perfect complexes of $\cO_X$-modules admits a dg-enhancement $\perf_\dg(X)$; see for instance \cite{LO} or ~\cite[Example~4.5]{CT1}. Moreover, whenever $X$ is smooth and proper the dg category $\perf_\dg(X)$ is saturated in the sense of Kontsevich. Given a pair $(X,Y)$ of $k$-schemes, the bivariant Hochschild, cyclic, and periodic, homology of $(X,Y)$ can be obtained from the pair of dg categories $(\perf_\dg(X),\perf_\dg(Y))$ by applying the corresponding bivariant theory. Therefore, when $\cB=\perf_\dg(X)$ and $\cC=\perf_\dg(Y)$, the above isomorphisms \eqref{eq:new1}-\eqref{eq:new3} reduce to the corresponding isomorphisms associated to the schemes $X$ and $Y$.

Note also that the isomorphisms \eqref{eq:iso-4}-\eqref{eq:iso-6} lead to important specializations of Theorem~\ref{thm:susp} when we replace $\cB$ or $\cC$ by $\underline{k}$. When $\cB=\underline{k}$, $\Sigma(\cC)$ corresponds to the suspension of $\cC$ in the different homology theories. When $\cC=\underline{k}$, $\Sigma(\cB)$ corresponds to the desuspension.

\subsection*{Acknowledgments:} The author is very grateful to Clark Barwick, Sasha Beilinson, Dan Grayson, Haynes Miller, and Jack Morava for stimulating discussions. He would also like to thank the Departments of Mathematics at MIT and UIC for their hospitality (where this work was carried out) and the Midwest Topology Network for financial support.

\end{document}